\documentclass[12pt,reqno]{amsart}
\usepackage{amssymb}
\usepackage{eurosym}
\usepackage{amsfonts}
\usepackage{amsmath}
\usepackage[bookmarksnumbered, colorlinks, plainpages]{hyperref}

\setcounter{MaxMatrixCols}{10}

\hypersetup{colorlinks=true,linkcolor=red, anchorcolor=green, citecolor=cyan, urlcolor=red, filecolor=magenta, pdftoolbar=true}
\textheight 22.5truecm \textwidth 14.5truecm
\setlength{\oddsidemargin}{0.35in}\setlength{\evensidemargin}{0.35in}
\setlength{\topmargin}{-.5cm}
\newtheorem{theorem}{Theorem}[section]
\newtheorem{lemma}[theorem]{Lemma}

\theoremstyle{definition}
\newtheorem{definition}[theorem]{Definition}

\theoremstyle{remark}
\newtheorem{remark}[theorem]{Remark}
\numberwithin{equation}{section}

\begin{document}
\date{{\scriptsize Received: , Accepted: .}}
\title[On the topological equivalence of $S$-metric and cone $S$-metric
spaces]{On the topological equivalence of $S$-metric and cone $S$-metric
spaces}
\subjclass[2010]{Primary 54H25; Secondary 47H10.}
\keywords{$S$-metric, cone $S$-metric, $N$-cone metric.}

\begin{abstract}
The aim of this paper is to establish the equivalence between the concepts of
an $S$-metric space and a cone $S$-metric space using\ some topological
approaches. We introduce a new notion of $TVS$-cone $S$-metric space using
some facts about topological vector spaces. We see that the known results on
cone $S$-metric spaces (or $N$-cone metric spaces) can be directly obtained
from the studies on $S$-metric spaces.
\end{abstract}

\author[N. TA\c{S}]{N\.{I}HAL TA\c{S}}
\address[Nihal Ta\c{s}]{Bal\i kesir University, Department of Mathematics,
10145 Bal\i kesir, TURKEY}
\email{nihaltas@balikesir.edu.tr}

\maketitle

\setcounter{page}{1}


\section{Introduction}

\label{sec:intro} The study of cone metric spaces was started with the paper
\cite{Guang-cone-metric}. Since then, various studies have been obtained on
cone metric spaces. But, using the topological aspects and some different
approaches, it was proved that the notions of a metric space and a\ cone
metric space are equivalent (for example, see \cite{Du-Tvs-cone}, \cite%
{Zafer-Ercan}, \cite{Kadelburg-2011} and \cite{Khani-2011} for more details).

Recently, $S$-metric spaces have been introduced as a generalization of
metric spaces \cite{Sedghi-S-metric}. The relationships between a metric and
an $S$-metric were given with some counter examples (see \cite{Gupta}, \cite%
{Hieu} and \cite{Ozgur-mathsci}). Then, Dhamodharan and Krishnakumar
introduced a new generalized metric space called as a cone $S$-metric space
\cite{Dhamodharan-cone-S-metric}. This metric space is also called as $N$%
-cone metric space by Malviya and Fisher in \cite{Malviya-N-Cone}. Some
well-known fixed-point results were generalized on both cone $S$-metric and $%
N$-cone metric spaces (for example, \cite{Dhamodharan-cone-S-metric}, \cite{Fernandez-2015} and \cite
{Malviya-N-Cone}).

In the present work, we show the topological equivalence of an $S$-metric
space and a cone $S$-metric space. To do this, we introduce a new notion
called as a $TVS$-cone $S$-metric space as a generalization of both metric
and cone $S$-metric (or $N$-cone metric) spaces. In Section \ref{sec:1}, we
recall some necessary definitions and lemmas in the sequel. In Section \ref%
{sec:2}, we present a notion of a $TVS$-cone $S$-metric space and establish
the equivalence between new this space and a cone $S$-metric space. Also, we
see that some known theorems studied on cone $S$-metric spaces (or $N$-cone
metric spaces) can be directly obtained from the studies on $S$-metric
spaces. In Section \ref{sec:3}, we investigate the relationships between an $%
S$-metric space and a cone $S$-metric space in view of their topological
properties. In Section \ref{sec:4}, we give a brief account of review about the obtained results and draw
a diagram which shows the relations among some known generalized metric
spaces.

\section{Preliminaries}

\label{sec:1} In this section, we recall some necessary notions and results
related to cone, $S$-metric and cone $S$-metric (or $N$-cone metric).

\begin{definition}
\cite{Sedghi-S-metric} \label{def1} Let $X$ be a nonempty set and $\mathcal{S%
}:X\times X\times X\rightarrow \left[ 0,\infty \right) $ be a function
satisfying the following conditions for all $x,y,z,a\in X:$

\begin{enumerate}
\item $\mathcal{S}(x,y,z)\geq 0$,

\item $\mathcal{S}(x,y,z)=0$ if and only if $x=y=z$,

\item $\mathcal{S}(x,y,z)\leq \mathcal{S}(x,x,a)+\mathcal{S}(y,y,a)+\mathcal{%
S}(z,z,a)$.
\end{enumerate}

Then the function $\mathcal{S}$ is called an $S$-metric on $X$ and the pair $%
(X,\mathcal{S})$ is called an $S$-metric space.
\end{definition}

\begin{definition}
\cite{Sedghi-S-metric} \label{def5} Let $(X,\mathcal{S})$ be an $S$-metric
space and $\left\{ x_{n}\right\} $ be a sequence in this space.

\begin{enumerate}
\item A sequence $\left\{ x_{n}\right\} \subset X$ converges to $x\in X$ if $%
\mathcal{S}(x_{n},x_{n},x)\rightarrow 0$ as $n\rightarrow \infty $, that is,
for each $\varepsilon >0$, there exists $n_{0}\in
\mathbb{N}
$ such that for all $n\geq n_{0}$ we have $\mathcal{S}(x_{n},x_{n},x)<%
\varepsilon $.

\item A sequence $\left\{ x_{n}\right\} \subset X$ is a Cauchy sequence if $%
\mathcal{S}(x_{n},x_{n},x_{m})\rightarrow 0$ as $n,m\rightarrow \infty $,
that is, for each $\varepsilon >0$, there exists $n_{0}\in
\mathbb{N}
$ such that for all $n,m\geq n_{0}$ we have $\mathcal{S}(x_{n},x_{n},x_{m})<%
\varepsilon $.

\item The $S$-metric space $(X,\mathcal{S})$ is complete if every Cauchy
sequence is a convergent sequence.
\end{enumerate}
\end{definition}

\begin{lemma}
\cite{Sedghi-S-metric} \label{lem3} Let $(X,\mathcal{S})$ be an $S$-metric
space and $x,y\in X$. Then we have%
\begin{equation*}
\mathcal{S}(x,x,y)=\mathcal{S}(y,y,x)\text{.}
\end{equation*}
\end{lemma}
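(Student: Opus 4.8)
The plan is to derive the symmetry $\mathcal{S}(x,x,y)=\mathcal{S}(y,y,x)$ purely from the three axioms in Definition \ref{def1}, using only the triangle-type inequality (3) with carefully chosen substitutions. First I would apply axiom (3) with the triple $(x,x,y)$ and auxiliary point $a=y$: this gives $\mathcal{S}(x,x,y)\leq \mathcal{S}(x,x,y)+\mathcal{S}(x,x,y)+\mathcal{S}(y,y,y)$, which is useless on its own, so the real idea is to pick the auxiliary point so that two of the three terms on the right collapse. The productive choice is to bound $\mathcal{S}(x,x,y)$ by taking the triple $(x,x,y)$ with $a=y$ replaced instead by setting things up so one term vanishes via axiom (2): with $a=x$ in the inequality for $\mathcal{S}(y,y,x)$ we get $\mathcal{S}(y,y,x)\leq \mathcal{S}(y,y,x)+\mathcal{S}(y,y,x)+\mathcal{S}(x,x,x)=2\mathcal{S}(y,y,x)$, again trivial. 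So the first honest step is to realize we must combine (3) applied to both $\mathcal{S}(x,x,y)$ and $\mathcal{S}(y,y,x)$ and chain them.

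The approach I would actually carry out: apply axiom (3) to the triple $(x,x,y)$ choosing $a$ so as to introduce $\mathcal{S}(y,y,x)$-type quantities. Taking $x_1=x_2=x$, $x_3=y$, $a=y$ is wasteful; instead take the triple with all slots equal to what we need. Concretely, write the inequality $\mathcal{S}(x,x,y)\leq \mathcal{S}(x,x,a)+\mathcal{S}(x,x,a)+\mathcal{S}(y,y,a)$ and set $a=x$, obtaining $\mathcal{S}(x,x,y)\leq 2\mathcal{S}(x,x,x)+\mathcal{S}(y,y,x)=\mathcal{S}(y,y,x)$ by axiom (2), since $\mathcal{S}(x,x,x)=0$. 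That is the key inequality: $\mathcal{S}(x,x,y)\leq \mathcal{S}(y,y,x)$. Then by the symmetric argument, swapping the roles of $x$ and $y$ — apply axiom (3) to the triple $(y,y,x)$ with $a=y$ — I get $\mathcal{S}(y,y,x)\leq 2\mathcal{S}(y,y,y)+\mathcal{S}(x,x,y)=\mathcal{S}(x,x,y)$. Combining the two inequalities yields equality.

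There is essentially no obstacle here; the only subtlety is choosing the auxiliary point $a$ correctly so that exactly two of the three right-hand terms become $\mathcal{S}(\cdot,\cdot,\cdot)$ with all arguments equal and hence vanish by axiom (2). Once that observation is made, both inequalities $\mathcal{S}(x,x,y)\leq \mathcal{S}(y,y,x)$ and $\mathcal{S}(y,y,x)\leq \mathcal{S}(x,x,y)$ follow in one line each, and antisymmetry of $\leq$ on $[0,\infty)$ finishes the proof. I would present it as exactly these two applications of axiom (3) followed by a one-sentence conclusion.
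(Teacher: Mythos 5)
Your proof is correct and is exactly the standard argument for this lemma (the paper itself only cites it from \cite{Sedghi-S-metric}, where the proof is precisely these two applications of condition (3) with $a=x$ and $a=y$ respectively, using $\mathcal{S}(x,x,x)=\mathcal{S}(y,y,y)=0$). The exploratory false starts in your first paragraph could be deleted; the second paragraph alone is the complete proof.
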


\begin{definition}
\cite{Sedghi-S-metric} \label{def7} Let $(X,\mathcal{S})$ be an $S$-metric
space. For $r>0$ and $x\in X$, the open ball $B_{S}(x,r)$ defined as follows$%
:$%
\begin{equation*}
B_{S}(x,r)=\{y\in X:\mathcal{S}(y,y,x)<r\}.
\end{equation*}
\end{definition}

\begin{definition}
\cite{Guang-cone-metric} Let $E$ be a real Banach space and $P$ be a subset
of $E$. $P$ is called a cone if and only if
\end{definition}

\begin{enumerate}
\item $P$ is closed, nonempty and $P\neq \left\{ 0\right\} $,

\item If $a,b\in
\mathbb{R}
$, $a,b\geq 0$, $x,y\in P$ then $ax+by\in P$,

\item If $x\in P$ and $-x\in P$ then $x=0$.
\end{enumerate}

Given a cone $P\subset E$, a partial ordering $\precsim $ with respect to $P$
is defined by $x\precsim y$ if and only if $y-x\in P$. It was written $%
x\prec y$ to indicate that $x\precsim y$ but $x\neq y$. Also $x\ll y$ stands
for $y-x\in intP$ where $intP$ denotes the interior of $P$ \cite%
{Guang-cone-metric}.

\begin{lemma}
\cite{Khani-2011} \label{lem4} Let $(X,E)$ be a cone space with $x\in P$ and
$y\in intP$. Then one can find $n\in
\mathbb{N}
$ such that $x\ll ny$.
\end{lemma}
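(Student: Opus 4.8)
The plan is to exploit the hypothesis that $y$ lies in the \emph{interior} of the cone $P$: this gives a whole norm-ball around $y$ sitting inside $P$, and then a rescaling argument finishes the job. First I would fix $\delta>0$ such that the open ball $B_{E}(y,\delta):=\{z\in E:\|z-y\|<\delta\}$ is contained in $P$; such a $\delta$ exists by the very definition of $\mathrm{int}\,P$. Since $B_{E}(y,\delta)$ is an open subset of $E$ contained in $P$, it is in fact contained in $\mathrm{int}\,P$.

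Next I would pick $n\in\mathbb{N}$ large enough that $\tfrac{1}{n}\|x\|<\delta$; this is possible because $\|x\|$ is a fixed nonnegative real number and $\tfrac{1}{n}\to 0$ as $n\to\infty$. Then $\big\|\,(y-\tfrac{1}{n}x)-y\,\big\|=\tfrac{1}{n}\|x\|<\delta$, so the vector $y-\tfrac{1}{n}x$ belongs to $B_{E}(y,\delta)$ and hence to $\mathrm{int}\,P$.

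Finally, multiplication by the positive scalar $n$ is a linear homeomorphism of $E$ which, by property (2) in the definition of a cone, maps $P$ onto $P$, and therefore maps $\mathrm{int}\,P$ onto $\mathrm{int}\,P$. Applying this map to $y-\tfrac{1}{n}x\in\mathrm{int}\,P$ yields $ny-x\in\mathrm{int}\,P$, which is exactly the statement $x\ll ny$.

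I do not expect a serious obstacle here: the only points that require a moment's care are the passage from ``an open ball contained in $P$'' to ``that ball contained in $\mathrm{int}\,P$'' and the fact that positive scalings preserve the interior of $P$, and both are immediate from elementary topology of Banach spaces. I would also remark that the hypothesis $x\in P$ is never used; the argument is valid for an arbitrary $x\in E$.
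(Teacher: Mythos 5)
Your argument is correct and is the standard proof of this fact (fix a norm-ball $B_{E}(y,\delta)\subseteq P$, note it lies in $\mathrm{int}\,P$ since it is open, take $n$ with $\tfrac{1}{n}\|x\|<\delta$, and rescale); the paper itself only cites this lemma from Khani and Pourmahdian without reproducing a proof, so there is nothing to diverge from. Your closing observation is also accurate: the hypothesis $x\in P$ is never needed, since the rescaling argument works for arbitrary $x\in E$.
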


\begin{lemma}
\cite{Khani-2011} \label{lem5} Let $y\in intP$. If $x\geq y$ for all $x$
then $x\in intP$.
\end{lemma}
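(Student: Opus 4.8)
The plan is to argue directly in the norm topology of the underlying Banach space $E$, reading $x\geq y$ as $x-y\in P$ and using only that $P$ is additively closed (the case $a=b=1$ of the second cone axiom) together with the fact that translations are homeomorphisms of $E$.

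First I would unpack the hypothesis $y\in intP$: there is a radius $\delta>0$ with $B(y,\delta)=\{z\in E:\left\Vert z-y\right\Vert<\delta\}\subseteq P$; equivalently, $y+h\in P$ whenever $\left\Vert h\right\Vert<\delta$. Then, given $x$ with $x-y\in P$, I would show that this same $\delta$ witnesses $x\in intP$. Take any $h$ with $\left\Vert h\right\Vert<\delta$ and write
\begin{equation*}
x+h=(x-y)+(y+h)\text{.}
\end{equation*}
Here $x-y\in P$ by hypothesis and $y+h\in P$ by the choice of $\delta$, so $x+h\in P$ because $P+P\subseteq P$. As $h$ ranges over the open $\delta$-ball about the origin, this gives $B(x,\delta)\subseteq P$, i.e. $x\in intP$.

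I do not expect a genuine obstacle: the whole content is that the translate by the fixed vector $x-y\in P$ of the neighbourhood $y+B(0,\delta)\subseteq P$ of $y$ stays inside $P$. The only points worth care are to invoke the additive closure of $P$ rather than transitivity of the order $\precsim$, and to remember that ``interior'' is taken with respect to the norm topology, which is precisely what legitimizes the ``same $\delta$'' trick.
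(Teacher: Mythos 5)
Your argument is correct: the decomposition $x+h=(x-y)+(y+h)$, with $x-y\in P$ by hypothesis and $y+h\in P$ for $\left\Vert h\right\Vert<\delta$, together with $P+P\subseteq P$, is exactly the standard proof that $P+\operatorname{int}P\subseteq\operatorname{int}P$. The paper itself states this lemma as a citation to Khani--Pourmahdian and gives no proof, so there is nothing to compare against; your proof fills that gap correctly, and you also rightly read the awkwardly phrased hypothesis ``$x\geq y$ for all $x$'' as ``for any $x$ with $x\geq y$.''
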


\begin{lemma}
\cite{Khani-2011} \label{lem6} Let $(E,P)$ be a cone space. If $x\leq y\ll z$
then $x\ll z$.
\end{lemma}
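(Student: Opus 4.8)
The plan is to reduce the statement to the single topological fact that $intP+P\subseteq intP$, and then apply it to the decomposition $z-x=(z-y)+(y-x)$. First I would unwind the hypotheses using the definitions of the orderings: the relation $x\leq y$ means $y-x\in P$, while $y\ll z$ means $z-y\in intP$. Adding these two vectors gives $(z-y)+(y-x)=z-x$, so it suffices to prove $z-x\in intP$, i.e. that the sum of a vector lying in $intP$ and a vector lying in $P$ again lies in $intP$.

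Next I would establish this auxiliary inclusion. Let $u\in intP$ and $v\in P$. Since $u\in intP$, there is an open set $U$ with $u\in U\subseteq P$. Because $E$ is a normed (hence topological) vector space, the translation map $w\mapsto w+v$ is a homeomorphism of $E$ onto itself; therefore $U+v$ is an open set containing $u+v$. Moreover $U+v\subseteq P+P\subseteq P$, where the last inclusion is condition (2) in the definition of a cone applied with $a=b=1$. Hence $u+v$ is an interior point of $P$, which proves $intP+P\subseteq intP$. Applying this with $u=z-y\in intP$ and $v=y-x\in P$ yields $z-x=(z-y)+(y-x)\in intP$, that is, $x\ll z$, as desired.

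As for the difficulty, there is essentially no serious obstacle here; the proof is short and elementary. The only point that needs a little care is the invocation of translation-invariance of the topology of $E$ — it is this, rather than any order-theoretic property, that converts the algebraic identity $z-x=(z-y)+(y-x)$ into a genuine statement about interiors. Everything else is bookkeeping with the definitions of $\leq$, $\ll$ and $intP$, so I would keep the exposition brief.
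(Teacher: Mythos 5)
Your proof is correct: the paper itself states this lemma without proof, citing \cite{Khani-2011}, and your argument via $z-x=(z-y)+(y-x)$ together with the inclusion $intP+P\subseteq intP$ (justified by translation-invariance of the topology and $P+P\subseteq P$) is exactly the standard argument given in that reference. Nothing further is needed.
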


\begin{definition}
\cite{Dhamodharan-cone-S-metric} \label{def2} Suppose that $E$ is a real
Banach space, then $P$ is a cone in $E$ with $intP\neq \emptyset $ and $%
\precsim $ is partial ordering with respect to $P$. Let $X$ be a nonempty
set, a function $\mathcal{S}:X\times X\times X\rightarrow E$ is called a
cone $S$-metric on $X$ if it satisfies the following conditions

\begin{enumerate}
\item $0\precsim \mathcal{S}(x,y,z)$,

\item $\mathcal{S}(x,y,z)=0$ if and only if $x=y=z$,

\item $\mathcal{S}(x,y,z)\precsim \mathcal{S}(x,x,a)+\mathcal{S}(y,y,a)+%
\mathcal{S}(z,z,a)$.
\end{enumerate}

Then the function $\mathcal{S}$ is called a cone $S$-metric on $X$ and the
pair $(X,\mathcal{S})$ is called a cone $S$-metric space.
\end{definition}

We note that the notion of a cone $S$-metric is also called as an $N$-cone
metric in \cite{Malviya-N-Cone}. We use also some results from \cite%
{Malviya-N-Cone}.

\begin{lemma}
\cite{Dhamodharan-cone-S-metric} \label{lem1} Let $(X,\mathcal{S})$ be a
cone $S$-metric space. Then we get%
\begin{equation*}
\mathcal{S}(x,x,y)=\mathcal{S}(y,y,x)\text{.}
\end{equation*}
\end{lemma}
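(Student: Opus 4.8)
The plan is to imitate the proof of Lemma \ref{lem3} for ordinary $S$-metric spaces, replacing inequalities of real numbers by the partial order $\precsim$ and compensating for the loss of genuine subtraction by invoking the antisymmetry of $\precsim$, which comes from the third defining property of a cone.

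First I would apply the triangle-type inequality, condition (3) of Definition \ref{def2}, to the ordered triple $(x,x,y)$, which gives
\begin{equation*}
\mathcal{S}(x,x,y)\precsim \mathcal{S}(x,x,a)+\mathcal{S}(x,x,a)+\mathcal{S}(y,y,a)
\end{equation*}
for every $a\in X$. Specializing to $a=x$ and using $\mathcal{S}(x,x,x)=0$, which follows from condition (2), collapses the right-hand side to $\mathcal{S}(y,y,x)$, so $\mathcal{S}(x,x,y)\precsim \mathcal{S}(y,y,x)$. Interchanging the roles of $x$ and $y$ in the identical argument then yields the reverse relation $\mathcal{S}(y,y,x)\precsim \mathcal{S}(x,x,y)$.

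Finally, from the two relations just obtained we have both $\mathcal{S}(y,y,x)-\mathcal{S}(x,x,y)\in P$ and $\mathcal{S}(x,x,y)-\mathcal{S}(y,y,x)\in P$; since these two vectors are negatives of one another, the third defining property of a cone (if $u\in P$ and $-u\in P$ then $u=0$) forces $\mathcal{S}(x,x,y)-\mathcal{S}(y,y,x)=0$, i.e.\ $\mathcal{S}(x,x,y)=\mathcal{S}(y,y,x)$. I do not expect any real obstacle: the single point deserving care is that, unlike in $\mathbb{R}$, one cannot pass from $\mathcal{S}(x,x,y)\precsim \mathcal{S}(y,y,x)$ and $\mathcal{S}(y,y,x)\precsim \mathcal{S}(x,x,y)$ to equality by mere cancellation, so the antisymmetry of $\precsim$ must be used explicitly through the cone axiom rather than treating $\precsim$ as if it were the usual order on the reals.
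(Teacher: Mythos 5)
Your argument is correct and is essentially the standard proof of this symmetry property: the paper itself states this lemma without proof (citing Dhamodharan--Krishnakumar), and the cited proof is exactly your computation --- apply condition (3) of Definition \ref{def2} to $(x,x,y)$ with $a=x$, use $\mathcal{S}(x,x,x)=0$ to get $\mathcal{S}(x,x,y)\precsim \mathcal{S}(y,y,x)$, symmetrize, and conclude by the antisymmetry of $\precsim$ coming from the cone axiom $P\cap(-P)=\{0\}$. Your explicit remark that antisymmetry must be invoked through the cone axiom, rather than cancelling as in $\mathbb{R}$, is the right point of care.
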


\begin{definition}
$($Topology of $N$-cone metric space$)$ \cite{Fernandez-2015} \label{def6}
Let $(X,N)$ be an $N$-cone metric space, each $N$-cone metric $N$ on $X$
generates a topology $\tau _{N}$ on $X$ whose base is the family of open $N$%
-balls defined as%
\begin{equation*}
B_{N}(x,c)=\left\{ y\in X:N(y,y,x)\ll c\right\} \text{,}
\end{equation*}%
for $c\in E$ with $0\ll c$ and for all $x\in X$.
\end{definition}

Definition \ref{def6} can also be considered for cone $S$-metric spaces as
follows:

Let $(X,\mathcal{S})$ be a cone $S$-metric space. The open ball $B_{S}(x,c)$
is defined by%
\begin{equation*}
B_{S}(x,c)=\{y\in X:\mathcal{S}(y,y,x)\ll c\},
\end{equation*}%
for $c\in E$ with $0\ll c$ and for all $x\in X$.

\section{$TVS$-cone $S$-metric spaces}

\label{sec:2} Let $E$ be a topological vector space (briefly $TVS$) with its
zero vector $\theta _{E}$. A nonempty subset $K$ of $E$ is called a convex
cone if $K+K\subseteq K$ and $\lambda K\subseteq K$ for $\lambda \geq 0$. A
convex cone $K$ is said to be pointed if $K\cap (-K)=\left\{ \theta
_{E}\right\} $. For a given cone $K\subseteq E$, a partial ordering $%
\precsim _{K}$ with respect to $K$ is defined by%
\begin{equation*}
x\precsim _{K}y\Longleftrightarrow y-x\in K\text{.}
\end{equation*}%
$x\prec _{K}y$ stands for $x\precsim _{K}y$ and $x\neq y$. Also $x\ll y$
stands for $y-x\in intK$ where $intK$ denotes the interior of $K$ \cite%
{Du-Tvs-cone}.

Let $Y$ be a locally convex Hausdorff $TVS$ with its zero vector $\theta $, $%
K$ be a proper, closed and convex pointed cone in $Y$ with $intK\neq
\emptyset $, $e\in intK$ and $\precsim _{K}$ be a partial ordering with
respect to $K$. The nonlinear scalarization function $\xi _{e}:Y\rightarrow
\mathbb{R}
$ is defined by%
\begin{equation*}
\xi _{e}(y)=\inf \left\{ r\in
\mathbb{R}
:y\in re-K\right\} \text{,}
\end{equation*}%
for all $y\in Y$ (see \cite{Chen-2005}, \cite{Du-2008}, \cite{Gerth-1990},
\cite{Gopfert-2000} and \cite{Gopfert-2003} for more details).

We recall the following lemma given in \cite{Chen-2005}, \cite{Du-2008},
\cite{Gerth-1990}, \cite{Gopfert-2000} and \cite{Gopfert-2003}.

\begin{lemma}
\label{lem2} For each $r\in
\mathbb{R}
$ and $y\in Y$, the following statements are satisfied$:$

\begin{enumerate}
\item $\xi _{e}(y)\leq r$ if and only if $y\in re-K$,

\item $\xi _{e}(y)>r$ if and only if $y\notin re-K$,

\item $\xi _{e}(y)\geq r$ if and only if $y\notin re-intK$,

\item $\xi _{e}(y)<r$ if and only if $y\in re-intK$,

\item $\xi _{e}(.)$ is positively homogeneous and continuous on $Y$,

\item If $y_{1}\in y_{2}+K$ then $\xi _{e}(y_{2})\leq \xi _{e}(y_{1})$,

\item $\xi _{e}(y_{1}+y_{2})\leq \xi _{e}(y_{1})+\xi _{e}(y_{2})$ for all $%
y_{1},y_{2}\in Y$.
\end{enumerate}
\end{lemma}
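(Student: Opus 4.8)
The plan is to deduce all seven items from a single structural fact about the ``admissible'' scalars. For $y\in Y$ set
\[
A_y=\{r\in\mathbb{R}:y\in re-K\}=\{r\in\mathbb{R}:re-y\in K\},
\]
so that $\xi_e(y)=\inf A_y$. I would first prove that $A_y=[\xi_e(y),\infty)$ with $\xi_e(y)\in\mathbb{R}$, and separately that $\{r\in\mathbb{R}:re-y\in intK\}=(\xi_e(y),\infty)$; items (1)--(4) are then just rereadings of these two equalities, and (5)--(7) follow by short arguments.

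For the first equality: $A_y$ is upward closed, since $r\in A_y$ and $s\geq r$ give $se-y=(re-y)+(s-r)e\in K+K\subseteq K$ (using that $K$ is a convex cone and $(s-r)e\in K$, because $e\in intK\subseteq K$ and $s-r\geq 0$); $A_y$ is closed because $K$ is closed and $r\mapsto re-y$ is continuous; $A_y\neq\emptyset$ because, choosing a balanced neighbourhood $U$ of $\theta$ with $e+U\subseteq K$ (possible as $e\in intK$) and using $\tfrac1r y\to\theta$, one gets $e-\tfrac1r y\in e+U\subseteq K$ for all large $r$, hence $re-y\in K$; and $A_y$ is bounded below because, picking via Hahn--Banach separation a continuous linear functional $f$ on $Y$ with $f\geq 0$ on $K$ and $f(e)>0$ (here the properness of $K$ together with $e\in intK$ is used), $re-y\in K$ forces $r\geq f(y)/f(e)$. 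Thus $A_y=[\xi_e(y),\infty)$ with $\xi_e(y)$ finite, which is exactly (1), and (2) is its logical negation. For the second equality: if $re-y\in intK$ then, by continuity of $t\mapsto re-y-te$ at $t=0$, $(r-t)e-y\in intK\subseteq K$ for small $t>0$, so $r>\xi_e(y)$; conversely, if $r>\xi_e(y)$ pick $s\in[\xi_e(y),r)$, so $se-y\in K$ and $re-y=(se-y)+(r-s)e\in K+intK\subseteq intK$, using $(r-s)e\in intK$ and the elementary inclusion $K+intK\subseteq intK$ for convex cones. This gives (3), and (4) is its negation.

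For (6) and (7) only (1) is needed: if $y_1\in y_2+K$ and $r=\xi_e(y_1)$, then $re-y_2=(re-y_1)+(y_1-y_2)\in K+K\subseteq K$, so $\xi_e(y_2)\leq r$; and with $r_i=\xi_e(y_i)$ one has $(r_1+r_2)e-(y_1+y_2)=(r_1e-y_1)+(r_2e-y_2)\in K$, so $\xi_e(y_1+y_2)\leq r_1+r_2$. For (5): for $\lambda>0$, $\{r:\lambda y\in re-K\}=\lambda\{s:y\in se-K\}$ since $\tfrac1\lambda K=K$, whence $\xi_e(\lambda y)=\lambda\xi_e(y)$, while $\xi_e(\theta)=0$ because $re\in K$ exactly for $r\geq 0$ ($re\notin K$ for $r<0$ follows from pointedness and $e\neq\theta$). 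Continuity is then immediate from (3) and (4): $\{y:\xi_e(y)<r\}=re-intK$ is open and $\{y:\xi_e(y)>r\}=Y\setminus(re-K)$ is open, so $\xi_e$ pulls back every open ray to an open set.

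The only step requiring real care is the lower-boundedness of $A_y$, i.e.\ that $\xi_e$ never takes the value $-\infty$: this is precisely where properness of the cone enters, through the separation argument producing the functional $f$. Every remaining step is a direct application of the cone axioms, the closedness of $K$, and the inclusion $K+intK\subseteq intK$.
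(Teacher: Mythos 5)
Your proof is correct. Note that the paper does not prove this lemma at all---it is recalled verbatim, with citations, from the literature on the nonlinear scalarization (Gerstewitz--Tammer) function---so there is no in-paper argument to compare against; your reduction of all seven items to the two set identities $\left\{ r\in \mathbb{R}:re-y\in K\right\} =\left[ \xi _{e}(y),\infty \right) $ and $\left\{ r\in \mathbb{R}:re-y\in intK\right\} =\left( \xi _{e}(y),\infty \right) $ is exactly the standard argument in those references, and you correctly isolate the one nontrivial point (finiteness of $\xi _{e}$, via a separating functional $f$ with $f\geq 0$ on $K$ and $f(e)>0$, which uses local convexity and the properness of $K$) that a careless proof would miss.
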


Now we introduce the notion of a $TVS$-cone $S$-metric space.

\begin{definition}
\label{def3} Let $X$ be a nonempty set and $\mathcal{S}_{p}:X\times X\times
X\rightarrow Y$ be a vector-valued function. If the following conditions hold%
$:$

\begin{enumerate}
\item $\theta \precsim _{K}\mathcal{S}_{p}(x,y,z)$,

\item $\mathcal{S}_{p}(x,y,z)=\theta $ if and only if $x=y=z$,

\item $\mathcal{S}_{p}(x,y,z)\precsim _{K}\mathcal{S}_{p}(x,x,a)+\mathcal{S}%
_{p}(y,y,a)+\mathcal{S}_{p}(z,z,a)$,
\end{enumerate}

for all $x,y,z,a\in X$, then the function $\mathcal{S}_{p}$ is called a $TVS$%
-cone $S$-metric and the pair $(X,\mathcal{S}_{p})$ is called a $TVS$-cone $%
S $-metric space.
\end{definition}

\begin{remark}
\label{rem1} A cone $S$-metric space is a special case of a $TVS$-cone $S$%
-metric space.
\end{remark}

\begin{theorem}
\label{thm1} Let $(X,\mathcal{S}_{p})$ be a $TVS$-cone $S$-metric space.
Then the function $\mathcal{S}_{p}^{S}:X\times X\times X\rightarrow \left[
0,\infty \right) $ defined by%
\begin{equation*}
\mathcal{S}_{p}^{S}=\xi _{e}\circ \mathcal{S}_{p}\text{,}
\end{equation*}%
is an $S$-metric.
\end{theorem}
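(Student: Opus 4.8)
The plan is to verify the three axioms of an $S$-metric (Definition \ref{def1}) for $\mathcal{S}_p^S = \xi_e \circ \mathcal{S}_p$, drawing on the properties of the nonlinear scalarization function $\xi_e$ collected in Lemma \ref{lem2} together with the $TVS$-cone $S$-metric axioms of Definition \ref{def3}. First I would establish nonnegativity: since $\theta \precsim_K \mathcal{S}_p(x,y,z)$, i.e. $\mathcal{S}_p(x,y,z) \in K$, we have $\mathcal{S}_p(x,y,z) \in \theta + K$, so part (6) of Lemma \ref{lem2} (applied with $y_2 = \theta$, $y_1 = \mathcal{S}_p(x,y,z)$) gives $\xi_e(\theta) \leq \xi_e(\mathcal{S}_p(x,y,z))$; since $\xi_e(\theta) = 0$ by positive homogeneity (part (5)), this yields $\mathcal{S}_p^S(x,y,z) \geq 0$, which incidentally confirms the codomain $[0,\infty)$ is appropriate.

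Next I would handle axiom (2), the identity-of-indiscernibles condition. If $x = y = z$ then $\mathcal{S}_p(x,y,z) = \theta$ by Definition \ref{def3}(2), so $\mathcal{S}_p^S(x,y,z) = \xi_e(\theta) = 0$. Conversely, suppose $\mathcal{S}_p^S(x,y,z) = 0$. I need to conclude $\mathcal{S}_p(x,y,z) = \theta$, which then forces $x=y=z$ again by Definition \ref{def3}(2). Here I would use parts (1) and (3) of Lemma \ref{lem2}: $\xi_e(\mathcal{S}_p(x,y,z)) = 0$ gives both $\mathcal{S}_p(x,y,z) \in 0\cdot e - K = -K$ (from part (1) with $r=0$) and $\mathcal{S}_p(x,y,z) \notin -\,\mathrm{int}\,K$ (from part (3) with $r=0$). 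But we also know $\mathcal{S}_p(x,y,z) \in K$ from axiom (1). Combining $\mathcal{S}_p(x,y,z) \in K \cap (-K) = \{\theta\}$ (using that $K$ is a pointed cone) immediately gives $\mathcal{S}_p(x,y,z) = \theta$. This is the step I expect to require the most care, since one must be precise about which part of Lemma \ref{lem2} delivers the membership in $-K$ and invoke pointedness of $K$ correctly; actually the cleanest route is: $\mathcal{S}_p(x,y,z) \in K$ and $\mathcal{S}_p(x,y,z) \in -K$ (the latter from Lemma \ref{lem2}(1)), hence by pointedness it is $\theta$.

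Finally, for the triangle-type inequality (3), start from the $TVS$-cone inequality $\mathcal{S}_p(x,y,z) \precsim_K \mathcal{S}_p(x,x,a) + \mathcal{S}_p(y,y,a) + \mathcal{S}_p(z,z,a)$, which means the right-hand side minus the left-hand side lies in $K$, i.e. $\mathcal{S}_p(x,y,z) \in \big(\mathcal{S}_p(x,x,a)+\mathcal{S}_p(y,y,a)+\mathcal{S}_p(z,z,a)\big) + K$. Applying Lemma \ref{lem2}(6) (monotonicity) gives
\begin{equation*}
\xi_e(\mathcal{S}_p(x,y,z)) \leq \xi_e\big(\mathcal{S}_p(x,x,a)+\mathcal{S}_p(y,y,a)+\mathcal{S}_p(z,z,a)\big),
\end{equation*}
and then two applications of subadditivity, Lemma \ref{lem2}(7), bound the right side by $\xi_e(\mathcal{S}_p(x,x,a)) + \xi_e(\mathcal{S}_p(y,y,a)) + \xi_e(\mathcal{S}_p(z,z,a))$, which is exactly $\mathcal{S}_p^S(x,x,a) + \mathcal{S}_p^S(y,y,a) + \mathcal{S}_p^S(z,z,a)$. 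Assembling the three parts completes the verification that $\mathcal{S}_p^S$ is an $S$-metric. Throughout, the only genuinely nontrivial ingredient is the scalarization lemma itself, which we are permitted to cite; the rest is bookkeeping with cone inequalities.
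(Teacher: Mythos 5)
Your proposal is correct and follows essentially the same route as the paper's own proof: nonnegativity and the identity-of-indiscernibles condition via Lemma \ref{lem2} together with pointedness of $K$ (the paper likewise reduces the converse direction to $\mathcal{S}_{p}(x,y,z)\in K\cap(-K)=\{\theta\}$), and the triangle-type inequality via parts (6) and (7) of Lemma \ref{lem2}. You merely spell out more explicitly which parts of the scalarization lemma deliver $\xi_{e}(\theta)=0$ and the membership in $-K$, details the paper leaves implicit.
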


\begin{proof}
Using the condition (1) given in Definition \ref{def3} and Lemma \ref{lem2},
we get $\mathcal{S}_{p}^{S}(x,y,z)\geq 0$ for all $x,y,z\in X$. From the
condition (2) given in Definition \ref{def3} and Lemma \ref{lem2}, we obtain
the following cases:

\textbf{Case 1:} If $x=y=z$, then we have
\begin{equation*}
\mathcal{S}_{p}^{S}(x,y,z)=\xi _{e}\circ \mathcal{S}_{p}(x,y,z)=\xi
_{e}(\theta )=0\text{.}
\end{equation*}

\textbf{Case 2:} If $\mathcal{S}_{p}^{S}(x,y,z)=0$, then we have%
\begin{equation*}
\xi _{e}\circ \mathcal{S}_{p}(x,y,z)=0\Rightarrow \mathcal{S}_{p}(x,y,z)\in
K\cap (-K)=\left\{ \theta \right\} \Rightarrow x=y=z\text{.}
\end{equation*}%
If we apply the condition (3) given in Definition \ref{def3} together with
the conditions (6) and (7) given in Lemma \ref{lem2}, then we obtain%
\begin{eqnarray*}
\mathcal{S}_{p}^{S}(x,y,z) &=&\xi _{e}\circ \mathcal{S}_{p}(x,y,z) \\
&\leq &\xi _{e}\left( \mathcal{S}_{p}(x,x,a)+\mathcal{S}_{p}(y,y,a)+\mathcal{%
S}_{p}(z,z,a)\right) \\
&\leq &\xi _{e}\left( \mathcal{S}_{p}(x,x,a)+\mathcal{S}_{p}(y,y,a)\right)
+\xi _{e}\left( \mathcal{S}_{p}(z,z,a)\right) \\
&\leq &\xi _{e}\left( \mathcal{S}_{p}(x,x,a)\right) +\xi _{e}\left( \mathcal{%
S}_{p}(y,y,a)\right) +\xi _{e}\left( \mathcal{S}_{p}(z,z,a)\right) \\
&=&\mathcal{S}_{p}^{S}(x,x,a)+\mathcal{S}_{p}^{S}(y,y,a)+\mathcal{S}%
_{p}^{S}(z,z,a)\text{,}
\end{eqnarray*}%
for all $x,y,z,a\in X$. Therefore, $\mathcal{S}_{p}^{S}$ is an $S$-metric.
\end{proof}

\begin{remark}
\label{rem2} Let $(X,\mathcal{S})$ be a cone $S$-metric space. Then the
function $\mathcal{S}_{p}^{S}:X\times X\times X\rightarrow \left[ 0,\infty
\right) $ defined by
\begin{equation*}
\mathcal{S}_{p}^{S}=\xi _{e}\circ \mathcal{S}\text{,}
\end{equation*}%
is an $S$-metric.
\end{remark}

Using the ideas of \cite{Dhamodharan-cone-S-metric} and \cite{Malviya-N-Cone}%
, we give the following definition.

\begin{definition}
\label{def4} Let $(X,\mathcal{S}_{p})$ be a $TVS$-cone $S$-metric space, $%
x\in X$ and $\left\{ x_{n}\right\} $ be a sequence in $X$.

\begin{enumerate}
\item $\left\{ x_{n}\right\} $ converges to $x$ if and only if $\mathcal{S}%
_{p}(x_{n},x_{n},x)\rightarrow \theta $ as $n\rightarrow \infty $, that is,
there exists $n_{0}\in
\mathbb{N}
$ such that for all $n\geq n_{0}$, $\mathcal{S}_{p}(x_{n},x_{n},x)\ll c$ for
each $c\in Y$ with $\theta \ll c$. It is denoted by%
\begin{equation*}
\underset{n\rightarrow \infty }{\lim }x_{n}=x\text{.}
\end{equation*}

\item $\left\{ x_{n}\right\} $ is a Cauchy sequence if $\mathcal{S}%
_{p}(x_{n},x_{n},x_{m})\rightarrow \theta $ as $n,m\rightarrow \infty $,
that is, there exists $n_{0}\in
\mathbb{N}
$ such that for all $n,m\geq n_{0}$, $\mathcal{S}_{p}(x_{n},x_{n},x_{m})\ll
c $ for each $c\in Y$ with $\theta \ll c$.

\item $(X,\mathcal{S}_{p})$ is complete if every Cauchy sequence in $X$ is
convergent.
\end{enumerate}
\end{definition}

\begin{theorem}
\label{thm2} Let $(X,\mathcal{S}_{p})$ be a $TVS$-cone $S$-metric space, $%
x\in X$, $\left\{ x_{n}\right\} $ be a sequence in $X$ and $\mathcal{S}%
_{p}^{S}$ be defined as in Theorem \ref{thm1}. Then the following statements
hold$:$

\begin{enumerate}
\item If $\left\{ x_{n}\right\} $ converges to $x$ in $(X,\mathcal{S}_{p})$,
then $\left\{ x_{n}\right\} $ converges to $x$ in $(X,\mathcal{S}_{p}^{S})$.

\item If $\left\{ x_{n}\right\} $ is a Cauchy sequence in $(X,\mathcal{S}%
_{p})$, then $\left\{ x_{n}\right\} $ is a Cauchy sequence in $(X,\mathcal{S}%
_{p}^{S})$.

\item If $(X,\mathcal{S}_{p})$ is complete, then $(X,\mathcal{S}_{p}^{S})$
is complete.
\end{enumerate}
\end{theorem}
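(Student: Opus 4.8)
The plan is to translate everything through the nonlinear scalarization function $\xi_e$, using Lemma~\ref{lem2}, together with the elementary fact that $\varepsilon e\in intK$ for every real $\varepsilon>0$, so that $\theta\ll\varepsilon e$. Statements (1) and (2) amount to one direction of this translation, while (3) uses the reverse direction combined with the completeness hypothesis.

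For (1), I would fix $\varepsilon>0$ and apply Definition~\ref{def4}(1) with $c=\varepsilon e$: there is $n_{0}\in\mathbb{N}$ such that $\mathcal{S}_{p}(x_{n},x_{n},x)\ll\varepsilon e$, i.e.\ $\mathcal{S}_{p}(x_{n},x_{n},x)\in\varepsilon e-intK$, for all $n\ge n_{0}$. By Lemma~\ref{lem2}(4) this says $\mathcal{S}_{p}^{S}(x_{n},x_{n},x)=\xi_{e}(\mathcal{S}_{p}(x_{n},x_{n},x))<\varepsilon$ for all $n\ge n_{0}$; since $\mathcal{S}_{p}^{S}\ge 0$ by Theorem~\ref{thm1}, this is exactly convergence of $\{x_{n}\}$ to $x$ in $(X,\mathcal{S}_{p}^{S})$. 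Statement (2) is the identical computation with $x_{m}$ in place of $x$.

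For (3), let $\{x_{n}\}$ be a Cauchy sequence in $(X,\mathcal{S}_{p}^{S})$; I would show it is a Cauchy sequence in $(X,\mathcal{S}_{p})$, after which completeness of $(X,\mathcal{S}_{p})$ produces a limit $x$ of $\{x_{n}\}$ in $(X,\mathcal{S}_{p})$, and statement~(1) transfers this convergence to $(X,\mathcal{S}_{p}^{S})$, giving completeness. Fix $c\in Y$ with $\theta\ll c$. First I produce $\delta>0$ with $\delta e\precsim_{K}c$: the map $\delta\mapsto c-\delta e$ is continuous and takes the value $c\in intK$ at $\delta=0$, hence $c-\delta e\in intK\subseteq K$ for all sufficiently small $\delta>0$ (alternatively one may invoke the $TVS$-analogue of Lemma~\ref{lem4}). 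Since $\mathcal{S}_{p}^{S}(x_{n},x_{n},x_{m})\to 0$, there is $n_{0}\in\mathbb{N}$ with $\xi_{e}(\mathcal{S}_{p}(x_{n},x_{n},x_{m}))<\delta$ for all $n,m\ge n_{0}$, so by Lemma~\ref{lem2}(4) we get $\mathcal{S}_{p}(x_{n},x_{n},x_{m})\in\delta e-intK$, i.e.\ $\mathcal{S}_{p}(x_{n},x_{n},x_{m})\ll\delta e$. Then $c-\mathcal{S}_{p}(x_{n},x_{n},x_{m})=(c-\delta e)+(\delta e-\mathcal{S}_{p}(x_{n},x_{n},x_{m}))\in K+intK\subseteq intK$, so $\mathcal{S}_{p}(x_{n},x_{n},x_{m})\ll c$ for all $n,m\ge n_{0}$, which is the Cauchy condition in $(X,\mathcal{S}_{p})$.

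The only inputs beyond Lemma~\ref{lem2} are the standard cone facts $\varepsilon e\in intK$ for $\varepsilon>0$ and $K+intK\subseteq intK$. I expect the main obstacle to be the reverse translation in (3): converting $\xi_{e}(\mathcal{S}_{p}(x_{n},x_{n},x_{m}))\to 0$ into $\mathcal{S}_{p}(x_{n},x_{n},x_{m})\ll c$ for an \emph{arbitrary} prescribed $c\gg\theta$, which is precisely why one must insert an intermediate scalar $\delta$ with $\delta e\precsim_{K}c$. Once this is handled via Lemma~\ref{lem2}(4) and the interior structure of $K$, all three statements follow directly.
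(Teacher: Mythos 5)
Your proposal is correct, and for parts (1) and (2) it coincides with the paper's argument: both translate $\mathcal{S}_{p}(x_{n},x_{n},x)\ll\varepsilon e$ into $\xi_{e}(\mathcal{S}_{p}(x_{n},x_{n},x))<\varepsilon$ via Lemma~\ref{lem2}(4), using $\varepsilon e\in intK$. The genuine difference is in part (3). The paper disposes of it with the single sentence that it follows from conditions (1) and (2), but that is not literally sufficient: to prove completeness of $(X,\mathcal{S}_{p}^{S})$ one must start from a Cauchy sequence in $(X,\mathcal{S}_{p}^{S})$ and show it is Cauchy in $(X,\mathcal{S}_{p})$, which is the \emph{converse} of statement (2). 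That converse is the only nontrivial point, because the Cauchy condition in $(X,\mathcal{S}_{p})$ quantifies over \emph{all} $c$ with $\theta\ll c$, not merely over the scalar multiples $\varepsilon e$. You identify this correctly and close the gap by producing $\delta>0$ with $c-\delta e\in K$ (via continuity of $\delta\mapsto c-\delta e$ and openness of $intK$) and then using $K+intK\subseteq intK$ to pass from $\mathcal{S}_{p}(x_{n},x_{n},x_{m})\ll\delta e$ to $\mathcal{S}_{p}(x_{n},x_{n},x_{m})\ll c$; both auxiliary facts are standard for a closed convex pointed cone with nonempty interior in a topological vector space, and Lemma~\ref{lem2}(4) does the rest. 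So your argument is not merely a rederivation: it supplies the reverse translation that the paper's one-line proof of (3) omits, at the modest cost of a little extra cone topology.
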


\begin{proof}
(1) Let $\varepsilon >0$ be given. Using Lemma \ref{lem2} and Theorem \ref%
{thm1}, if $\left\{ x_{n}\right\} $ converges to $x$ in $(X,\mathcal{S}_{p})$%
, then there exists $n_{0}\in
\mathbb{N}
$ such that%
\begin{equation*}
\mathcal{S}_{p}(x_{n},x_{n},x)\ll \varepsilon e\Longleftrightarrow \mathcal{S%
}_{p}^{S}(x_{n},x_{n},x)=\xi _{e}\circ \mathcal{S}_{p}(x_{n},x_{n},x)<%
\varepsilon \text{,}
\end{equation*}%
for all $n\geq n_{0}$ since $e\in intK$. Therefore, the condition (1) hold.

(2) Let $\left\{ x_{n}\right\} $ be a Cauchy sequence in $(X,\mathcal{S}%
_{p}) $. Then there exists $n_{0}\in
\mathbb{N}
$ such that%
\begin{equation*}
\mathcal{S}_{p}(x_{n},x_{n},x_{m})\ll \varepsilon e\Longleftrightarrow
\mathcal{S}_{p}^{S}(x_{n},x_{n},x_{m})<\varepsilon \text{,}
\end{equation*}%
for all $n,m\geq n_{0}$. Hence, $\left\{ x_{n}\right\} $ is a Cauchy
sequence in $(X,\mathcal{S}_{p}^{S})$.

(3) From the conditions (1) and (2), the condition (3) holds.
\end{proof}

\begin{theorem}
\label{thm3} Let $(X,\mathcal{S}_{p})$ be a complete $TVS$-cone $S$-metric
space and the self-mapping $T:X\rightarrow X$ satisfies the condition%
\begin{equation*}
\mathcal{S}_{p}(Tx,Tx,Ty)\precsim _{K}h\mathcal{S}_{p}(x,x,y)\text{,}
\end{equation*}%
for all $x,y\in X$ and some $h\in \left[ 0,1\right) $. Then $T$ has a unique
fixed point in $X$.
\end{theorem}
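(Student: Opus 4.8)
The plan is to push the whole problem down to the $S$-metric $\mathcal{S}_p^{S}=\xi _{e}\circ \mathcal{S}_p$ supplied by Theorem \ref{thm1}, run a Picard-iteration (Banach-type) argument there, and then lift the conclusion back; since $T$, the set $X$ and the equation $Tx=x$ do not refer to any metric, nothing is lost in the transfer.

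First I would scalarize the contractive hypothesis. Since $h\mathcal{S}_p(x,x,y)-\mathcal{S}_p(Tx,Tx,Ty)\in K$, i.e.\ $\mathcal{S}_p(Tx,Tx,Ty)\in h\mathcal{S}_p(x,x,y)+K$, Lemma \ref{lem2}(6) gives $\xi _{e}(\mathcal{S}_p(Tx,Tx,Ty))\leq \xi _{e}(h\mathcal{S}_p(x,x,y))$, and positive homogeneity (Lemma \ref{lem2}(5), valid since $h\geq 0$) turns this into
\[
\mathcal{S}_p^{S}(Tx,Tx,Ty)\leq h\,\mathcal{S}_p^{S}(x,x,y)\qquad (x,y\in X).
\]
By Theorem \ref{thm1}, $(X,\mathcal{S}_p^{S})$ is an $S$-metric space, and by Theorem \ref{thm2}(3) it is complete. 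Thus the remaining task is the classical one: a self-map of a complete $S$-metric space obeying the displayed inequality with $h\in \lbrack 0,1)$ has a unique fixed point.

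For this I would iterate $x_{n+1}=Tx_{n}$ from an arbitrary $x_{0}\in X$, obtaining $\mathcal{S}_p^{S}(x_{n},x_{n},x_{n+1})\leq h^{n}\mathcal{S}_p^{S}(x_{0},x_{0},x_{1})$. To prove $\{x_{n}\}$ Cauchy one first recasts axiom (3) of Definition \ref{def1}, together with the symmetry $\mathcal{S}(z,z,y)=\mathcal{S}(y,y,z)$ from Lemma \ref{lem3}, into the two-point form $\mathcal{S}(x,x,z)\leq 2\mathcal{S}(x,x,y)+\mathcal{S}(y,y,z)$, and then telescopes: for $n<m$,
\[
\mathcal{S}_p^{S}(x_{n},x_{n},x_{m})\leq 2\sum_{k=n}^{m-1}h^{k}\,\mathcal{S}_p^{S}(x_{0},x_{0},x_{1})\leq \frac{2h^{n}}{1-h}\,\mathcal{S}_p^{S}(x_{0},x_{0},x_{1})\longrightarrow 0.
\]
Completeness then yields $x_{n}\to x^{\ast }$ for some $x^{\ast }\in X$. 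Applying the same triangle-plus-symmetry estimate with auxiliary point $x_{n+1}=Tx_{n}$ gives $\mathcal{S}_p^{S}(Tx^{\ast },Tx^{\ast },x^{\ast })\leq 2h\,\mathcal{S}_p^{S}(x^{\ast },x^{\ast },x_{n})+\mathcal{S}_p^{S}(x^{\ast },x^{\ast },x_{n+1})\to 0$, so $\mathcal{S}_p^{S}(Tx^{\ast },Tx^{\ast },x^{\ast })=0$ and, by axiom (2) of Definition \ref{def1}, $Tx^{\ast }=x^{\ast }$. Uniqueness is immediate: if $Tx^{\ast }=x^{\ast }$ and $Ty^{\ast }=y^{\ast }$, then $\mathcal{S}_p^{S}(x^{\ast },x^{\ast },y^{\ast })\leq h\,\mathcal{S}_p^{S}(x^{\ast },x^{\ast },y^{\ast })$ forces $\mathcal{S}_p^{S}(x^{\ast },x^{\ast },y^{\ast })=0$, i.e.\ $x^{\ast }=y^{\ast }$.

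The one point requiring care is this Cauchy estimate: the $S$-metric triangle inequality has the repeated-argument shape $\mathcal{S}(x,y,z)\leq \mathcal{S}(x,x,a)+\mathcal{S}(y,y,a)+\mathcal{S}(z,z,a)$, so it must first be massaged (via Lemma \ref{lem3}) into the two-point form, after which the factor $2$ in front of the geometric series must be tracked through the telescoping; everything else is routine. Alternatively, once the scalarized inequality $\mathcal{S}_p^{S}(Tx,Tx,Ty)\leq h\,\mathcal{S}_p^{S}(x,x,y)$ is in hand, one may simply invoke the known Banach contraction theorem on complete $S$-metric spaces and omit the iteration altogether.
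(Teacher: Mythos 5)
Your proposal follows essentially the same route as the paper: scalarize the contractive condition via Lemma \ref{lem2} to get $\mathcal{S}_{p}^{S}(Tx,Tx,Ty)\leq h\mathcal{S}_{p}^{S}(x,x,y)$, note that $(X,\mathcal{S}_{p}^{S})$ is a complete $S$-metric space by Theorems \ref{thm1} and \ref{thm2}, and conclude by the Banach-type fixed point theorem for $S$-metric spaces (the paper simply cites Theorem 3.1 of Sedghi et al.\ at this point, whereas you optionally write out the Picard iteration in full). The argument is correct.
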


\begin{proof}
Using Theorem \ref{thm1} and Theorem \ref{thm2}, we obtain that $(X,\mathcal{%
S}_{p}^{S})$ is a complete $S$-metric space. From Lemma \ref{lem2}, we get%
\begin{equation*}
\mathcal{S}_{p}(Tx,Tx,Ty)\precsim _{K}h\mathcal{S}_{p}(x,x,y)\Longrightarrow
\mathcal{S}_{p}^{S}(Tx,Tx,Ty)\leq h\mathcal{S}_{p}^{S}(x,x,y)\text{,}
\end{equation*}%
for all $x,y\in X$. Then the proof is easily seen from Theorem 3.1 on page
263 in \cite{Sedghi-S-metric}.
\end{proof}

\begin{remark}
\label{rem3} $(1)$ Theorem \ref{thm3}, Theorem $3.1$ $($on page 263 in \cite%
{Sedghi-S-metric}$)$ and Theorem $2.1$ $($on page 239 in \cite%
{Dhamodharan-cone-S-metric}$)$ are equivalent.

$(2)$ By the similar arguments used in the proof of Theorem \ref{thm3}, we
obtain the following relations$:$

$(i)$ Theorem $2.5$ $($on page 242 in \cite{Dhamodharan-cone-S-metric}$)$
and Theorem $4$ $($on page 244 in \cite{Ozgur-chapter}$)$ are equivalent.

$(ii)$ Theorem $2.3$ $($on page 240 in \cite{Dhamodharan-cone-S-metric}$)$
and Theorem $3$ $($on page 240 in \cite{Ozgur-chapter}$)$ are equivalent.

$(iii)$ Theorem $2.1$ $($on page 7 in \cite{Malviya-N-Cone}$)$ and Corollary
$2.19$ $($on page 122 in \cite{Sedghi-2014}$)$ are equivalent.

$(iv)$ Theorem $2.1$ $($on page 35 in \cite{Fernandez-2015}$)$ and Theorem $%
3.1$ $($on page 263 in \cite{Sedghi-S-metric}$)$ are equivalent.

$(v)$ Theorem $2.2$ $($on page 35 in \cite{Fernandez-2015}$)$ and Corollary $%
2.8$ $($on page 118 in \cite{Sedghi-2014}$)$ are equivalent.

$(vi)$ Theorem $2.3$ $($on page 36 in \cite{Fernandez-2015}$)$ and Corollary
$2.15$ $($on page 121 in \cite{Sedghi-2014}$)$ are equivalent.
\end{remark}

\section{Topological equivalence of $S$-metric and cone $S$-metric spaces}

\label{sec:3}

In the following theorem, we give the topological equivalence of an $S$%
-metric and a cone $S$-metric space.

\begin{theorem}
\label{thm4} Let $(X,\mathcal{S})$ be a cone $S$-metric space $($or $N$-cone
metric space$)$, $e\in intP$ and $a\in \left[ 0,\infty \right) $ with $a<1$.
Then there exists an $S$-metric $\mathcal{S}^{\ast }:X\times X\times
X\rightarrow \left[ 0,\infty \right) $ which induces the same topology on $X$
as the cone $S$-metric topology induced by $\mathcal{S}$.
\end{theorem}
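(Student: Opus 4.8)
The plan is to build $\mathcal{S}^{\ast}$ by composing the cone $S$-metric $\mathcal{S}$ with the nonlinear scalarization function $\xi_{e}$, exactly as in Theorem \ref{thm1} and Remark \ref{rem2}, and then to verify that the topology generated by the open balls $B_{S^{\ast}}(x,r)$ coincides with the cone $S$-metric topology generated by the balls $B_{S}(x,c)$. Since a real Banach space $E$ is in particular a locally convex Hausdorff $TVS$, and $P$ is a proper closed convex pointed cone with $intP\neq\emptyset$, we may fix $e\in intP$ and apply Lemma \ref{lem2} with $Y=E$ and $K=P$. Define $\mathcal{S}^{\ast}=\xi_{e}\circ\mathcal{S}$. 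By Remark \ref{rem2} (equivalently, by Theorem \ref{thm1} together with Remark \ref{rem1}), $\mathcal{S}^{\ast}$ is an $S$-metric on $X$; this disposes of the first half of the statement with essentially no extra work.

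It remains to show that the two topologies agree, and the heart of the argument is a pair of ball inclusions. First I would show that every cone ball contains an $\mathcal{S}^{\ast}$-ball around the same point: given $x\in X$ and $c\in E$ with $\theta\ll c$, using Lemma \ref{lem4} pick $n\in\mathbb{N}$ with $e\ll nc$, i.e. $\frac{1}{n}e\ll c$; then I claim $B_{S^{\ast}}(x,\tfrac{1}{n})\subseteq B_{S}(x,c)$. Indeed, if $\mathcal{S}^{\ast}(y,y,x)=\xi_{e}(\mathcal{S}(y,y,x))<\tfrac1n$, then by Lemma \ref{lem2}(4) we get $\mathcal{S}(y,y,x)\in\tfrac1n e-intP$, so $\tfrac1n e-\mathcal{S}(y,y,x)\in intP$; combining with $\tfrac1n e\ll c$ and using that $intP+P\subseteq intP$ (or Lemma \ref{lem6}) yields $c-\mathcal{S}(y,y,x)\in intP$, i.e. $\mathcal{S}(y,y,x)\ll c$, so $y\in B_{S}(x,c)$. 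Conversely, I would show every $\mathcal{S}^{\ast}$-ball contains a cone ball: given $x\in X$ and $r>0$, the vector $c=\tfrac{r}{2}e$ satisfies $\theta\ll c$ since $e\in intP$, and if $y\in B_{S}(x,c)$, i.e. $\mathcal{S}(y,y,x)\ll \tfrac r2 e$, then $\tfrac r2 e-\mathcal{S}(y,y,x)\in intP\subseteq P$, so by Lemma \ref{lem2}(6) (monotonicity) applied with $y_{1}=\tfrac r2 e$, $y_{2}=\mathcal{S}(y,y,x)$ we get $\mathcal{S}^{\ast}(y,y,x)=\xi_{e}(\mathcal{S}(y,y,x))\leq \xi_{e}(\tfrac r2 e)=\tfrac r2<r$, using positive homogeneity and $\xi_{e}(e)=1$ from Lemma \ref{lem2}(1)--(4). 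Hence $B_{S}(x,\tfrac r2 e)\subseteq B_{S^{\ast}}(x,r)$.

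From these two inclusions the topological equivalence follows by the standard base-comparison argument: each basic open set of one topology contains, around any of its points, a basic open set of the other, so the two families of balls generate the same topology on $X$. The parameter $a\in[0,1)$ in the statement plays no role in the construction and can simply be ignored (it is an artifact of the intended parallel with Theorem \ref{thm3}); if one insists on incorporating it, one may instead scale and set $\mathcal{S}^{\ast}=(1-a)^{-1}\,\xi_{e}\circ\mathcal{S}$ or similar, which changes nothing topologically. I expect the only genuinely delicate point to be the careful handling of the relations $\ll$, $\precsim_{K}$, and the interior of the cone in the two ball inclusions — in particular invoking Lemma \ref{lem4} to produce the scalar $\tfrac1n$ small enough, and using Lemma \ref{lem2}(4) and (6) to translate between $\xi_{e}$-inequalities and cone membership; everything else is bookkeeping already contained in Theorem \ref{thm1} and Lemmas \ref{lem4}--\ref{lem6} and \ref{lem2}.
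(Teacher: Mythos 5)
Your proof is correct, but it takes a genuinely different route from the paper. The paper does \emph{not} reuse the scalarization function $\xi_{e}$ here; instead it runs a Frink/Khani--Pourmahdian-style metrization argument: it sets $h=1/a$ (this is where the hypothesis $a\in[0,1)$ is actually consumed), defines a quantized gauge $\Theta(x,y,z)=h^{\min\{\alpha\in\mathbb{Z}:\,\mathcal{S}(x,y,z)\ll h^{\alpha}e\}}$, and then forces the $S$-metric triangle inequality by taking $\mathcal{S}^{\ast}$ to be an infimum of sums of $\Theta$ over chains of points; the two ball inclusions are then checked against this chain construction. Your version replaces all of that with $\mathcal{S}^{\ast}=\xi_{e}\circ\mathcal{S}$, which is already known to be an $S$-metric by Theorem \ref{thm1} and Remark \ref{rem2}, and reduces the whole proof to the two same-centered ball inclusions, handled cleanly via Lemma \ref{lem4} (to produce $\tfrac1n e\ll c$), Lemma \ref{lem2}(4) and (6), and $intP+intP\subseteq intP$ (or Lemma \ref{lem6}). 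What your approach buys is brevity and the avoidance of the delicate points in the chain construction (well-definedness of the minimum in $\Theta$, non-degeneracy of the infimum over chains); what it costs is reliance on the scalarization machinery, i.e.\ on $E$ being locally convex Hausdorff and $P$ proper, closed, convex and pointed --- all of which hold for a cone in a real Banach space, so nothing is lost in this setting. Your observation that the parameter $a$ is superfluous for your construction is accurate; in the paper's proof it is essential, since it determines the base $h$ of the gauge. One small point to make fully explicit: the "standard base-comparison argument" needs that the $S^{\ast}$-balls and the cone balls each already form bases of their respective topologies (so that same-centered inclusions suffice at interior points of a ball); this is supplied by the $S$-metric literature and Definition \ref{def6}, so it is a citation rather than a gap.
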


\begin{proof}
At first, we put $h=\frac{1}{a}$ and define a function $\Theta :X\times
X\times X\rightarrow \left[ 0,\infty \right) $ as%
\begin{equation}
\Theta (x,y,z)=\left\{
\begin{array}{ccc}
h^{\min \left\{ \alpha :\mathcal{S}(x,y,z)\ll h^{\alpha }e\right\} } & \text{%
if} & \mathcal{S}(x,y,z)\neq 0 \\
0 & \text{if} & \mathcal{S}(x,y,z)=0%
\end{array}%
\right. ,  \label{eqn0}
\end{equation}%
where $\alpha \in
\mathbb{Z}
$. It can be easily checked that%
\begin{equation*}
\Theta (x,x,y)=\Theta (y,y,x)
\end{equation*}%
and%
\begin{equation*}
\Theta (x,y,z)=0\Longleftrightarrow x=y=z\text{.}
\end{equation*}%
Now we define the function $\mathcal{S}^{\ast }:X\times X\times X\rightarrow %
\left[ 0,\infty \right) $ by%
\begin{equation}
\mathcal{S}^{\ast }(x,y,z)=\inf \left\{ \sum\limits_{i=1}^{n-2}\Theta
(x_{i},x_{i+1},x_{i+2}):x_{1}=x,\ldots ,x_{n-1}=y,x_{n}=z\right\} \text{.}
\label{eqn1}
\end{equation}%
From the definitions (\ref{eqn0}) and (\ref{eqn1}), we have $\mathcal{S}%
^{\ast }(x,y,z)\geq 0$ and%
\begin{equation*}
\mathcal{S}^{\ast }(x,y,z)=0\Longleftrightarrow x=y=z\text{.}
\end{equation*}%
We show that the triangle inequality is satisfied by the function $\mathcal{S%
}^{\ast }$. For $\varepsilon >0$, we prove%
\begin{equation*}
\mathcal{S}^{\ast }(x,y,z)\leq \mathcal{S}^{\ast }(x,x,a)+\mathcal{S}^{\ast
}(y,y,a)+\mathcal{S}^{\ast }(z,z,a)+\varepsilon \text{.}
\end{equation*}%
By the definition (\ref{eqn1}), there exists $x_{1}=x,\ldots
,x_{n-1}=x,x_{n}=a$ with%
\begin{equation*}
\sum \Theta (x_{i},x_{i},x_{i+1})\leq \mathcal{S}^{\ast }(x,x,a)+\frac{%
\varepsilon }{3}\text{,}
\end{equation*}%
$y_{1}=y,\ldots ,y_{n-1}=y,y_{n}=a$ with%
\begin{equation*}
\sum \Theta (y_{i},y_{i},y_{i+1})\leq \mathcal{S}^{\ast }(y,y,a)+\frac{%
\varepsilon }{3}
\end{equation*}%
and $z_{1}=z,\ldots ,z_{n-1}=z,z_{n}=a$ with%
\begin{equation*}
\sum \Theta (z_{i},z_{i},z_{i+1})\leq \mathcal{S}^{\ast }(z,z,a)+\frac{%
\varepsilon }{3}\text{.}
\end{equation*}%
Therefore, we get%
\begin{eqnarray*}
\mathcal{S}^{\ast }(x,y,z) &\leq &\sum \Theta (x_{i},x_{i},x_{i+1})+\sum
\Theta (y_{i},y_{i},y_{i+1})+\sum \Theta (z_{i},z_{i},z_{i+1}) \\
&\leq &\mathcal{S}^{\ast }(x,x,a)+\mathcal{S}^{\ast }(y,y,a)+\mathcal{S}%
^{\ast }(z,z,a)+\varepsilon \text{,}
\end{eqnarray*}%
that is, $\mathcal{S}^{\ast }$ is an $S$-metric.

Now we show that each $B_{S}(x,c)$ contains some $B_{S^{\ast }}(x,r)$. Let
us consider the open ball $B_{S^{\ast }}(x,r)$ for $x\in X$ and $r\in \left[
0,\infty \right) $. It can be found $\alpha \in
\mathbb{Z}
$ such that $h^{\alpha }<r$. We put $c\ll h^{\alpha }e$. If $\mathcal{S}%
(x,x,y)\ll c$ then $\Theta (x,x,y)\leq h^{\alpha }<r$ and $\mathcal{S}^{\ast
}(x,x,y)\leq \Theta (x,x,y)<r$, for each $y\in X$. Then we get%
\begin{equation}
B_{S}(x,c)\subseteq B_{S^{\ast }}(x,r)\text{.}  \label{eqn2}
\end{equation}%
Conversely, let us consider the open ball $B_{S}(x,c)$ for $x\in X$ and $%
c\in E$. For each $x,y\in X$ and $r\in \left[ 0,\infty \right) $ if $%
\mathcal{S}^{\ast }(x,x,y)<r$ then we can find $x_{1}=x,\ldots
,x_{n-1}=x,x_{n}=y$ with%
\begin{equation*}
\sum \Theta (x_{i},x_{i},x_{i+1})<r\text{.}
\end{equation*}%
However for each $i<n$, we have%
\begin{equation*}
\mathcal{S}(x_{i},x_{i},x_{i+1})\ll \Theta (x_{i},x_{i},x_{i+1})e
\end{equation*}%
and so%
\begin{equation*}
\mathcal{S}(x,x,y)\leq \sum_{i=1}^{n-1}\Theta (x_{i},x_{i},x_{i+1})e\leq re%
\text{.}
\end{equation*}%
If we choose $r$ satisfying $re\ll c$, then we have%
\begin{equation*}
\mathcal{S}(x,x,y)\ll c
\end{equation*}%
and%
\begin{equation}
B_{S^{\ast }}(x,r)\subseteq B_{S}(x,c)\text{.}  \label{eqn3}
\end{equation}%
Therefore, from the inequalities (\ref{eqn2}) and (\ref{eqn3}), $\mathcal{S}%
^{\ast }$ induces the same topology as the cone $S$-metric topology of $S$.
\end{proof}

\section{Conclusion}

\label{sec:4}

We have defined the concept of a $TVS$-cone $S$-metric space as a
generalization of a cone $S$-metric space. We have established the
equivalence between the notions of an $S$-metric space and a $TVS$-cone $S$%
-metric space (resp. cone $S$-metric space) and presented some related
results. Also it is shown the topological equivalence of these spaces. On
the other hand, complex valued $S$-metric spaces are a special class of cone
$S$-metric spaces. But it is important to study some fixed-point results in
complex valued $S$-metric spaces since some contractions have a product and
quotient (see \cite{Mlaiki-complex-S-metric} and \cite{Tas-complex} for more
details).

From the known (see \cite{Dhamodharan-cone-S-metric}, \cite{Du-Tvs-cone},
\cite{Zafer-Ercan}, \cite{Guang-cone-metric}, \cite{Gupta}, \cite{Hieu},
\cite{Kadelburg-2011}, \cite{Khani-2011}, \cite{Malviya-N-Cone} and \cite%
{Ozgur-mathsci} for more details) and obtained results, we get the following
diagram:\newline

\ \ \ \ \ \ metric spaces $\Longleftrightarrow $ cone metric spaces

\ \ \ \ \ \ \ \ \ \ \ \ \ \ $\Downarrow $ \ \ \ \ \ \ \ \ \ \ \ \ \ \ \ \ \
\ \ \ \ \ \ \ \ \ \ \ \ $\Downarrow $

$\ \ \ S$-metric spaces $\Longleftrightarrow $ cone $S$-metric spaces = $N$%
-cone metric spaces

\end{document}